\title{The Norm map of Witt vectors}
\author{Vigleik Angeltveit}
\address{Mathematical Sciences Institute \\
John Dedman Building (Building 27) \\
Australian National University \\
Acton, ACT \\
0200 Australia}
\newtheorem{theorem}{Theorem}[section]
\newtheorem{thm}[theorem]{Theorem}
\newtheorem{lemma}[theorem]{Lemma}
\theoremstyle{definition}
\newtheorem{defn}[theorem]{Definition}
\newtheorem{remark}[theorem]{Remark}
\let\c@equation\c@theorem
\numberwithin{equation}{section}
             \newcommand{\bN}{\mathbb{N}}  
      \newcommand{\bW}{\mathbb{W}}   \newcommand{\bZ}{\mathbb{Z}}
\newcommand{\xto}{\xrightarrow}
\begin{document}
 
\begin{abstract}
We discuss a multiplicative version of the Verschiebung map of Witt vectors that we call the \emph{norm}.
\end{abstract}

\maketitle

\section{Introduction}
Witt vectors are ubiquitous in algebraic geometry and number theory, and also appear in algebraic topology when studying fixed points of the topological Hochschild homology spectrum. Most of the structure maps that are present have been exploited with great success, see e.g.\ Hesselholt and Madsen's paper \cite{HeMa04}.

In this short note we discuss a structure map of Witt vectors that appears implicitly in \cite{Br05} but to the best of our knowledge has never been made explicit. We also briefly discuss the connection with topological Hochschild homology. For a different perspective on how the norm map fits with the rest of the Witt vector structure maps, see \cite{An_genWitt}. We hope to return to further applications of the norm map elsewhere.

Let $k$ be a commutative ring and let $\bW(k)$ denote the ring of big Witt vectors of $k$. Fix a prime $p$. Recall (e.g.\ from \cite[1.4.8]{Il79}) that the Frobenius $F_p : \bW(k) \to \bW(k)$ satisfies
\[
 F_p(a) \equiv a^p \mod p,
\]
and define $\theta_p : \bW(k) \to \bW(k)$ by requiring that the equation
\[
 F_p(a) = a^p + p \theta_p(a)
\]
holds functorially in the ring $k$.

\begin{defn} \label{d:norm}
For a prime $p$, define $N_p : \bW(k) \to \bW(k)$ by
\[
 N_p(a) = a - V_p \theta_p(a).
\]
For a composite integer $d$, define $N_d$ inductively by factoring $d=d_1 \cdot d_2$ and setting $N_d = N_{d_1} \circ N_{d_2}$.
\end{defn}

Recall that a \emph{truncation set} is a set $S \subset \bN=\{1,2,\ldots\}$ which is closed under division, and that given a truncation set $S$ it is possible to define the Witt vectors $\bW_S(k)$ (see e.g.\ \cite[\S 3]{HeMa97a}). Given $d \in \bN$, let $\langle d \rangle$ denote the set of divisors of $d$. Given a truncation set $S$, let
\[
 T = \langle d \rangle S = \{t \in \bN \quad | \quad t=es \textnormal{ for some $e \mid d$, $s \in S$}\}.
\]
Note that with this definition $S = T/d$, where $T/d$ is defined as in Section \ref{s:background} below. We will prove the following result:

\begin{thm} \label{t:main1}
The map $N_d$ is a multiplicative map. Moreover, if $S$ is any truncation set then $N_d$ restricts to a multiplicative map
\[
 N_d : \bW_S(k) \to \bW_{\langle d \rangle S}(k),
\]
and the composite $F_d \circ N_d$ is the $d$'th power map $a \mapsto  a^d$.
\end{thm}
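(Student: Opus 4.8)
The plan is to reduce everything to a computation in ghost coordinates over a torsion-free base. Recall the ghost map $w\colon\bW(k)\to\prod_{n\geq 1}k$, $a\mapsto(w_n(a))_n$ with $w_n(a)=\sum_{e\mid n}e\,a_e^{n/e}$: it is a natural ring homomorphism, and it is injective whenever $k$ is $\bZ$-torsion-free. Since $\bW$, the $\bW_S$, and the operations $F_p$, $V_p$, $\theta_p$, and therefore every $N_d$, are natural in $k$, and since every commutative ring is a quotient of a polynomial ring over $\bZ$ over which any prescribed finite collection of Witt vectors of the quotient lifts, it suffices to prove all the assertions when $k$ is a polynomial ring over $\bZ$, where $w$ is injective. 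Throughout I will use the standard identities $w_n(F_p a)=w_{pn}(a)$, $w_n(V_p a)=p\,w_{n/p}(a)$ if $p\mid n$ and $w_n(V_p a)=0$ otherwise, and the ghost form $p\,w_n(\theta_p a)=w_{pn}(a)-w_n(a)^p$ of the defining equation $F_p(a)=a^p+p\theta_p(a)$.

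First I would work out the ghost coordinates of $N_p$ for $p$ prime. For $p\nmid n$ one immediately gets $w_n(N_p a)=w_n(a)$, and for $p\mid n$,
\[
 w_n(N_p a)=w_n(a)-p\,w_{n/p}(\theta_p a)=w_n(a)-\bigl(w_n(a)-w_{n/p}(a)^p\bigr)=w_{n/p}(a)^p.
\]
In particular $N_p$ is multiplicative on ghost coordinates, hence multiplicative on $\bW(k)$. Iterating, by induction on the number of prime factors of $d$ — the inductive step reducing to the elementary identity $\gcd(n,d_1d_2)=\gcd(n,d_1)\,\gcd(n/\gcd(n,d_1),d_2)$ — I would establish the closed formula
\[
 w_n(N_d a)=w_{n/\gcd(n,d)}(a)^{\gcd(n,d)}.
\]
Because the right-hand side does not depend on how $d$ was factored, this at once shows that $N_d$ is well defined; that it is multiplicative, since $w_{n/\gcd(n,d)}(ab)^{\gcd(n,d)}=w_{n/\gcd(n,d)}(a)^{\gcd(n,d)}w_{n/\gcd(n,d)}(b)^{\gcd(n,d)}$; and, using $\gcd(nd,d)=d$, that $w_n(F_d N_d a)=w_{nd}(N_d a)=w_{nd/\gcd(nd,d)}(a)^{\gcd(nd,d)}=w_n(a)^d=w_n(a^d)$, i.e.\ $F_d\circ N_d$ is the $d$-th power map.

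It remains to handle truncation sets. For a truncation set $S$ the restriction $\bW(k)\to\bW_S(k)$ is surjective; given $a\in\bW_S(k)$ I would pick any lift $\tilde a\in\bW(k)$ and define $N_d(a)$ to be the image of $N_d(\tilde a)$ under $\bW(k)\to\bW_{\langle d\rangle S}(k)$. To see that this is independent of the lift — for torsion-free $k$, hence by naturality in general — it suffices to observe that for each $t\in\langle d\rangle S$ the coordinate $w_t(N_d\tilde a)=w_{t/\gcd(t,d)}(\tilde a)^{\gcd(t,d)}$ involves only ghost coordinates indexed by $S$: writing $t=es$ with $e\mid d$, $s\in S$, and $d=ed'$, one computes $\gcd(t,d)=e\gcd(s,d')$, so that $t/\gcd(t,d)=s/\gcd(s,d')\in S$ because $S$ is closed under division. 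The same closed formula then shows that $N_d\colon\bW_S(k)\to\bW_{\langle d\rangle S}(k)$ is multiplicative and that its composite with $F_d\colon\bW_{\langle d\rangle S}(k)\to\bW_S(k)$ is the $d$-th power map on $\bW_S(k)$.

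The one genuinely fiddly point I anticipate is the truncation-set bookkeeping: the naive expression $a-V_p\theta_p(a)$ for $N_p(a)$ with $a\in\bW_S(k)$ does extend naturally over the larger truncation set $\langle p\rangle S$, even though $V_p\theta_p(a)$ a priori lives on a smaller one, and one must see this. Passing through the big Witt vectors and restricting only at the very end, as above, is precisely what avoids having to chase these intermediate truncation sets by hand; everything else is a direct ghost-coordinate calculation once the reduction to torsion-free $k$ is in place.
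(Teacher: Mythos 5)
Your proposal is correct and follows essentially the same route as the paper: compute $N_p$ on ghost coordinates via the defining relation for $\theta_p$, obtain the closed formula $w_n(N_d a)=w_{n/\gcd(n,d)}(a)^{\gcd(n,d)}$ by induction on a factorisation of $d$, and deduce well-definedness, multiplicativity, the truncation-set statement, and $F_d\circ N_d=(-)^d$ from that formula together with naturality and injectivity of the ghost map over torsion-free rings. You spell out two points the paper leaves implicit — the reduction to polynomial rings over $\bZ$ and the check that $t/\gcd(t,d)\in S$ for $t\in\langle d\rangle S$ — but the underlying argument is the same.
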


This result is perhaps surprising, because $N_d$ does not look multiplicative and in the definition of $N_p$ as $N_p = id - V_p \theta_p$ both $id$ and $V_p \theta_p$ map $\bW_S(k)$ to itself and $\bW_{\langle d \rangle S}(k)$ is larger. 

One natural question is to what extent the map $N_d$ is unique. We answer that as follows.

\begin{thm} \label{t:main2}
Suppose $N : \bW(k) \to \bW(k)$ is multiplicative, natural in $k$, restricts to a map $\bW_S(k) \to \bW_{\langle d \rangle S}(k)$ for any truncation set $S$, and suppose $F_d \circ N$ is the $d$'th power map. Then $N=N_d$.
\end{thm}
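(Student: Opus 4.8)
The plan is to reduce everything to the universal torsion-free case and to compare ghost components. Let $R=\bZ[a_1,a_2,\dots]$, let $a\in\bW(R)$ be the universal big Witt vector, and write $w_m=w_m(a)=\sum_{e\mid m}ea_e^{m/e}$ for its ghost components. Since $R$ is $\bZ$-torsion free the ghost map $\bW(R)\hookrightarrow\prod_{n\ge1}R$ is injective, and since $N$ is natural it is determined by $N(a)$; so it suffices to show that $N(a)$ and $N_d(a)$ have equal ghost components. I would first record the ghost-component formula $(N_d a)_n=w_{n/\gcd(n,d)}^{\gcd(n,d)}$: it holds for $N_p$ directly from $N_p=\mathrm{id}-V_p\theta_p$ (one gets $(N_p a)_n=w_{n/p}^{\,p}$ if $p\mid n$ and $w_n$ otherwise), and it propagates to composites via $\gcd(n,d_1d_2)=\gcd(n,d_1)\cdot\gcd\!\big(n/\gcd(n,d_1),\,d_2\big)$. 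Writing $g=\gcd(n,d)$ and $\hat n=n/g$, the goal is then exactly $v_n=w_{\hat n}^{\,g}$ for all $n$, where $v_n$ denotes the $n$-th ghost component of $N(a)$.

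The first real step is to pin down the shape of $v_n$. Over any $\bQ$-algebra the ghost map is a ring isomorphism, so on $\bQ$-algebras $N$ becomes a natural, multiplicative, unital self-map of the product monoid; evaluating componentwise, naturality makes each $v_n$ a polynomial $f$ in the $w_m$ (use the finite truncation sets $\{1,\dots,M\}$), and the relation $f(ww')=f(w)f(w')$ (componentwise product) together with $f(1,1,\dots)=1$ forces $f$ to be a single monomial, $v_n=\prod_m w_m^{e_{n,m}}$ with $e_{n,m}\in\{0,1,2,\dots\}$ almost all zero; this identity, a priori over $\bQ\otimes R$, then holds over $R$. (Unitality is either built into "multiplicative" or follows: $N(1)$ is an idempotent, $\bW$ of a connected ring has none but $0$ and $1$, and $N\equiv0$ is incompatible with $F_d\circ N$ being $a\mapsto a^d$.) Next I would feed in the truncation-set hypothesis: it says that for every truncation set $S$ the square with horizontal maps $N$ and vertical maps the restrictions $\bW(k)\to\bW_S(k)$ and $\bW(k)\to\bW_{\langle d\rangle S}(k)$ commutes, hence for $n\in\langle d\rangle S$ the component $v_n$ depends only on the $w_m$ with $m\in S$, so $e_{n,m}=0$ for $m\notin S$. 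Since $n\in\langle d\rangle S\iff\hat n\in S$ and the smallest truncation set containing $\hat n$ is $\langle\hat n\rangle$, we conclude $v_n=\prod_{m\mid\hat n}w_m^{e_{n,m}}$.

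Now I would determine the exponents by induction on the number of prime factors (with multiplicity) of $d/g$. When $d\mid n$ (the base case) the relation $F_d\circ N=(a\mapsto a^d)$ reads $v_{dn}=w_n^{\,d}$ on ghosts, i.e.\ $v_n=w_{n/d}^{\,d}=w_{\hat n}^{\,g}$. For the inductive step, given $n$ with $d/g>1$, pick a prime $p$ with $v_p(g)<v_p(d)$; then $v_p(\hat n)=0$, $\gcd(pn,d)=pg$, and $\widehat{pn}=pn/\gcd(pn,d)=\hat n$, so $d/\gcd(pn,d)$ has one fewer prime factor and by induction $v_{pn}=w_{\hat n}^{\,pg}=(w_{\hat n}^{\,g})^{p}$. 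The elementary congruence $w_{pn}(x)\equiv w_n(x)^p\pmod p$, valid for every Witt vector over every ring, applied to $x=N(a)$ gives $v_{pn}\equiv v_n^{\,p}\pmod{pR}$, hence $(w_{\hat n}^{\,g})^{p}\equiv v_n^{\,p}\pmod{pR}$; as Frobenius is injective on the domain $R/pR\cong\bF_p[a_1,a_2,\dots]$ this yields $v_n\equiv w_{\hat n}^{\,g}\pmod{pR}$. Comparing the monomial $v_n=\prod_{m\mid\hat n}w_m^{e_{n,m}}$ with $w_{\hat n}^{\,g}$ modulo $p$ finishes it: among the $w_m$ with $m\mid\hat n$ only $w_{\hat n}$ involves the variable $a_{\hat n}$, and linearly with coefficient $\hat n$, a unit mod $p$ precisely because $p\nmid\hat n$; so comparing $a_{\hat n}$-degrees forces $e_{n,\hat n}=g$, and then $\prod_{m\mid\hat n,\ m<\hat n}w_m^{e_{n,m}}\equiv1\pmod p$, which forces the remaining exponents to be $0$ since each $w_m$ is a non-unit in the domain $\bF_p[a_1,a_2,\dots]$. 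Thus $v_n=w_{\hat n}^{\,g}$ for all $n$, and injectivity of the ghost map plus naturality give $N=N_d$.

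I expect the last induction to be the main obstacle: upgrading the mod-$p$ equality $v_n\equiv w_{\hat n}^{\,g}\pmod p$ to an honest equality is exactly the point where all three hypotheses must be used at once --- multiplicativity for the monomial shape, the truncation-set behaviour to confine the monomial to the $w_m$ with $m\mid\hat n$, and $F_d\circ N=(a\mapsto a^d)$ both for the base case and for the congruence --- and it is also where one must check that a prime $p$ with $v_p(\gcd(n,d))<v_p(d)$ always exists and behaves correctly under passing from $n$ to $pn$. A subsidiary point that needs care is making precise exactly what ``$N$ restricts to a map $\bW_S(k)\to\bW_{\langle d\rangle S}(k)$'' gives on ghost components.
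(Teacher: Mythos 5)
Your proof is correct and follows essentially the same strategy as the paper's: pass to the universal polynomial ring $\bZ[a_1,a_2,\dots]$, use multiplicativity and naturality to see that each ghost component of $N$ is a monic monomial in the $w_m$ for $m$ dividing the appropriate index, pin down the components at indices divisible by $d$ via $F_d\circ N=(\,\cdot\,)^d$, and descend one prime at a time using a mod-$p$ congruence together with the rigidity of monic monomials in the $w_m$. The only substantive differences are that you handle composite $d$ directly by induction on the number of prime factors of $d/\gcd(n,d)$ where the paper reduces to $d=p$ prime, and that you derive the congruence $v_n\equiv w_{\hat n}^{\,g}\pmod p$ from the universal relation $w_{pn}(x)\equiv w_n(x)^p\pmod p$ plus injectivity of Frobenius, rather than from the paper's expansion of $y_{pt}$ in Witt coordinates at a minimal counterexample --- both are minor variations, and your version is if anything slightly more careful at the steps the paper leaves implicit.
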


The key to proving Theorems \ref{t:main1} and \ref{t:main2} is to understand what happens on ghost coordinates. In fact the formula for $N_d$ on ghost coordinates is very simple, as our next result shows.

\begin{thm} \label{t:main3}
On ghost coordinates the map $N_d$ is given by $\langle x_s \rangle \mapsto \langle y_t \rangle$ with
\[
 y_t = x_{t/g}^g \quad \textnormal{with} \quad g=\gcd(d,t).
\]
\end{thm}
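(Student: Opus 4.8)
The plan is to push the entire statement down to ghost coordinates, where the Frobenius, the Verschiebung, and multiplication all act by transparent formulas, prove the claim directly for $d=p$ prime, and then obtain composite $d$ by an induction whose only non-formal ingredient is a single gcd identity.

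First I would recall the standard ghost-coordinate descriptions. Writing $w_m$ for the $m$'th ghost component and $x_m = w_m(a)$, one has $w_m(ab)=w_m(a)w_m(b)$, $w_m(F_p a)=x_{pm}$, and $w_m(V_p a) = p\,x_{m/p}$ when $p\mid m$ and $w_m(V_p a)=0$ otherwise. Since the defining relation $F_p(a)=a^p+p\theta_p(a)$ is functorial in $k$, it is enough to read off $\theta_p$ on the universal example $k=\bZ[x_1,x_2,\ldots]$, which is torsion free, so that division by $p$ is unambiguous and the ghost map is injective; this gives $w_m(\theta_p a)=p^{-1}(x_{pm}-x_m^p)$. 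Combining, $w_m(V_p\theta_p a)$ equals $0$ when $p\nmid m$ and $x_m-x_{m/p}^p$ when $p\mid m$, hence
\[
 w_m(N_p a)= x_m - w_m(V_p\theta_p a)=\begin{cases} x_m, & p\nmid m,\\ x_{m/p}^{\,p}, & p\mid m,\end{cases}
\]
which is precisely $x_{m/g}^{\,g}$ with $g=\gcd(p,m)$. This settles the theorem for $d$ prime (and trivially for $d=1$).

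For composite $d$ I would induct on the number of prime factors. Factor $d=d_1d_2$ with $d_1,d_2$ smaller; by the inductive hypothesis $N_{d_i}$ acts on ghost coordinates as $\langle x_s\rangle\mapsto\langle x_{s/\gcd(d_i,s)}^{\gcd(d_i,s)}\rangle$. Composing these and putting $g_1=\gcd(d_1,t)$ and $g_2=\gcd(d_2,t/g_1)$, the $t$'th ghost component of $N_{d_1}\circ N_{d_2}$ comes out to $x_{t/(g_1g_2)}^{\,g_1g_2}$. So the theorem for $d$ reduces to the elementary identity
\[
 \gcd(d_1,t)\cdot\gcd\!\big(d_2,\,t/\gcd(d_1,t)\big)=\gcd(d_1d_2,t),
\]
which I would check one prime at a time: with $a=v_p(d_1)$, $b=v_p(d_2)$, $c=v_p(t)$ it becomes $\min(a,c)+\min\!\big(b,\max(0,c-a)\big)=\min(a+b,c)$, a short case split on whether $c\le a$ and whether $b\le c-a$. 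As a byproduct this identity shows $N_{d_1}\circ N_{d_2}$ agrees on ghost coordinates for every factorization of $d$, so $N_d$ in Definition \ref{d:norm} is well defined — which is worth establishing inside the same induction rather than assuming.

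I do not anticipate a real obstacle. Once the ghost formula for $\theta_p$ is in hand the prime case is a one-line computation, and the inductive step is controlled entirely by the gcd identity above. The only places that need care are the bookkeeping around the functorial definition of $\theta_p$ (hence the passage to a torsion-free model before dividing by $p$) and organizing the induction so that the independence of $N_d$ from the chosen factorization is proved simultaneously with the ghost formula.
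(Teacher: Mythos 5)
Your proposal is correct and follows essentially the same route as the paper: compute $\theta_p$ (hence $N_p$) on ghost coordinates for $d=p$ prime, then handle composite $d$ by induction on a factorization $d=d_1d_2$, reducing to the identity $\gcd(d_1,t)\cdot\gcd(d_2,t/\gcd(d_1,t))=\gcd(d_1d_2,t)$, and noting that this simultaneously shows $N_d$ is independent of the chosen factorization. The only difference is that you spell out the gcd identity via $p$-adic valuations and the torsion-free universal example justifying division by $p$, both of which the paper leaves implicit.
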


For example, consider the case when $d=p$ is prime and $S=\{1,p,\ldots,p^{n-1}\}$. Then $\langle p \rangle S = \{1,p,\ldots,p^n\}$, and Theorem \ref{t:main1} gives a map
\[
 N_p : W_{n-1}(k;p) \to W_n(k;p)
\]
which by Theorem \ref{t:main3} is given on ghost coordinates by
\[
 N_p\langle x_0,x_1,\ldots,x_{n-1} \rangle = \langle x_0, x_0^p, x_1^p,\ldots,x_{n-1}^p \rangle.
\]

\begin{remark}
The relation $F_d \circ N_d(a)=a^d$ is analogous to the relation $F_d \circ V_d(a)=da$ between the Verschiebung and Frobenius maps. But while the Verschiebung map $V_d : \bW_S(k) \to \bW_T(k)$ is defined for any truncation sets $S$ and $T$ with $S=T/d$, we need the stronger condition that $T=\langle d \rangle S$ for the norm map.

To see this, consider the following example. Let $T=\{1,2,3\}$ and let $d=2$. Then $S = T/2=\{1\}$. But then there is no way to define a map $N : \bW_S(k) \to \bW_T(k)$ such that the diagram
\[ \xymatrix{
 \bW(k) \ar[r]^-{N_2} \ar[d]_{R^\bN_S} & \bW(k) \ar[d]^{R^\bN_T} \\
 \bW_S(k) \ar[r]^-N & \bW_T(k)
} \]
commutes, because on ghost coordinates $R^\bN_T \circ N_2$ is given by $\langle x_n \rangle \mapsto \langle x_1, x_1^2, x_3 \rangle$ and this does not factor through $\bW_S(k)$.

(This is not a problem when defining the Verschiebung as the map given on ghost coordinates by $\langle x_n \rangle \mapsto \langle 0, 2x_1,0 \rangle$ does factor through $\bW_S(k)$.)
\end{remark}

\subsection{Relations to previous work}
The results by Brun \cite{Br05} give a norm map, but in a more roundabout way. Given a finite group $G$ and a commutative ring $k$, Brun regards $k$ as a functor from the category $U^{fG}$ to sets, where the objects of $U^{fG}$ are finite free $G$-sets and the morphisms are bispans of such. For formal reasons there is a functor $Fun(U^{fG}, Set) \to Fun(U^G,Set)$, where $U^G$ is the category of all finite $G$-sets and bispans. Hence $k$ gives rise to a $G$-Tambara functor.

His result \cite[Theorem B]{Br05} then says that evaluating this $G$-Tambara functor on $G/H$ gives $\bW_H(k)$, where $\bW_H(k)$ denotes the generalized Witt vectors of Dress and Siebeneicher \cite{DrSi88}. Because a $G$-Tambara functor has norm maps this then gives a norm map $N_H^G : \bW_H(k) \to \bW_G(k)$.

We can put ourselves in a situation where the norm maps can be compared as follows. Given natural numbers $n$ and $d$, let $G=C_{dn}$ and $H=C_n$. Also let $S=\langle n \rangle$ and $T=\langle dn \rangle$. Then $\bW_H(k)=\bW_S(k)$ and $\bW_G(k)=\bW_T(k)$, so our norm map $N_d : \bW_S(k) \to \bW_T(k)$ and the norm map $N_H^G : \bW_H(k) \to \bW_G(k)$ implicit in Brun's paper have the same source and target. It follows from Theorem \ref{t:main2} that they must agree.

\subsection{Acknowledgements}
The author would like to thank James Borger and Arnab Saha for interesting conversations about Witt vectors. The formula $N_p(a)=a-V_p \theta_p(a)$ for the norm is due to James Borger.

\section{Background on Witt vectors} \label{s:background}
In \cite{Wi37} Witt defined the $p$-typical Witt vectors as a way of lifting commutative rings from characteristic $p$ to characteristic $0$. Later Cartier \cite{Ca67} generalized Witt's construction to what has become known as the big Witt vectors. For a summery of the basic properties of Witt vectors, see e.g.\ \cite[\S 3]{HeMa97a} or Hesselholt's survery article \cite{He}. Here we give a very short summary of some of the properties of Witt vectors.

Given a truncation set $S$ and a commutative ring $k$, the ring $\bW_S(k)$ of Witt vectors is defined to be $k^S$ as a set, and addition and multiplication are defined by the requirement that the ghost map $w : \bW_S(k) \to k^S$ defined by
\[
 w(a)_s = \sum_{d \mid s} da_d^{s/d}
\]
is a ring homomorphism, functorially in $k$. We will write a Witt vector as $a=(a_s)$ and the image of a Witt vector under the ghost map as $x=\langle x_s \rangle$.

Given $T \subset S$ we have a restriction map $R^S_T : \bW_S(k) \to \bW_T(k)$ defined on Witt coordinates by $(R^S_T(a))_t=a_t$, or equivalently on ghost coordinates by $(R^S_T \langle x \rangle)_t = x_t$. Given a natural number $d$ and a truncation set $S$, it is customary to define
\[
 S/d = \{n \in \bN \quad | \quad dn \in S\}.
\]
Then the Frobenius $F_d : \bW_S(k) \to \bW_{S/d}(k)$ is defined on ghost coordinates by $(F_d \langle x \rangle)_n = x_{dn}$ while the formula on Witt coordinates is more complicated. The Verschiebung can be defined either on Witt coordinates by $(V_d(a))_s = a_{s/d}$ if $d \mid s$ and $0$ otherwise, or on ghost coordinates by $(V_d \langle x \rangle)_s = dx_{s/d}$ if $d \mid s$ and $0$ otherwise. These satisfy $F_d \circ V_d = d$, while $F_d \circ V_e = V_e \circ F_d$ for $\gcd(d,e)=1$. The composite $V_d \circ F_d$ is multiplication by $V_d(1)$. Both $F_d$ and $V_d$ commute with restriction maps, in the sense that the two superimposed diagrams
\[ \xymatrix{
 \bW_{S/d}(k) \ar@<0.7ex>[r]^-{V_d} \ar[d]_{R^{S/d}_{T/d}} & \bW_S(k)n \ar@<0.7ex>[l]^-{F_d} \ar[d]^{R^S_T} \\
 \bW_{T/d}(k) \ar@<0.7ex>[r]^-{V_d} & \bW_T(k) \ar@<0.7ex>[l]^-{F_d}
} \]
commute for any $T \subset S$.

From Theorem \ref{t:main3} it follows that the norm map $N_d$ commutes with restriction maps in the sense that the diagram
\[ \xymatrix{
 \bW_S(k) \ar[r]^-{N_d} \ar[d]_{R^S_T} & \bW_{\langle d \rangle S}(k) \ar[d]^{R^{\langle d \rangle S}_{\langle d \rangle T}} \\
 \bW_T(k) \ar[r]^-{N_d} & \bW_{\langle d \rangle T}(k)
} \]
commutes for any $T \subset S$.

\section{Proofs}
In addition to proving Theorems \ref{t:main1}, \ref{t:main2} and \ref{t:main3} we need to prove that Definition \ref{d:norm} is independent of the factorisation of $d$. We start with the description of the norm map on ghost coordinates.

\begin{proof}[Proof of Theorem \ref{t:main3}]
Suppose first that $d=p$ is prime. We start by computing $N_p(a)=a-V_p \theta_p(a)$ on ghost coordinates. To begin with, $\theta_p\langle x \rangle$ is given by
\[
 (\theta_p \langle x \rangle)_s = \Big( \frac{F_p \langle x \rangle - \langle x \rangle^p}{p} \Big)_s = \frac{x_{ps}-x_s^p}{p}.
\]
We then get
\[
 (N_p \langle x \rangle)_s = x_s - \big(V_p\big\langle t \mapsto \frac{x_{pt}-x_t^p}{p} \big\rangle\big)_s = x_s - \begin{cases} x_s - x_{s/p}^p \quad & \textnormal{if } p \mid s \\ 0 \quad & \textnormal{if } p \nmid s \end{cases}
\]
so
\[
 (N_p \langle x \rangle)_s = \begin{cases} x_{s/p}^p \quad & \textnormal{if } p \mid s \\ x_s \quad & \textnormal{if } p \nmid s \end{cases}
\]

Now suppose $d = d_1 \cdot d_2$ and that $N_{d_1}$ and $N_{d_2}$ are given on ghost coordinates by the formula in Theorem \ref{t:main3}. Using Definition \ref{d:norm} we get
\[
 (N_d \langle x \rangle)_s = (N_{d_1} \circ N_{d_2} \langle x \rangle)_s = (N_{d_2} \langle x \rangle)_{s/g}^g = (x_{(s/g)/h}^g)^h = x_{s/gh}^{gh},
\]
where $g=\gcd(d_1,s)$ and $h=\gcd(d_2,s/g)$. But then $gh=\gcd(d,s)$. This provides the induction step to proving Theorem \ref{t:main3}, as well as proving that the definition of $N_d$ in Definition \ref{d:norm} is independent of the factorization of $d$.
\end{proof}

With the formula for $N_d$ on ghost coordinates in hand the proof of Theorem \ref{t:main1} is easy.

\begin{proof}[Proof of Theorem \ref{t:main1}]
It follows immediately from the fact that $N_d$ is multiplicative on ghost coordinates that $N_d$ is multiplicative. Moreover, if $S$ is any truncation set and $t \in \langle d \rangle S$ then $(N_d \langle x_n \rangle)_t$ depends only on $x_s$ for $s \in S$. It then follows that $N_d$ restricts to a multiplicative map $\bW_S(k) \to \bW_{\langle d \rangle S}(k)$ because it does so on ghost coordinates.

Finally, the composite $F_d \circ N_d$ is given on ghost coordinates by $\langle x_s \rangle \mapsto \langle x_s^d \rangle$. Since the composite is the $d$'th power map on ghost coordinates it must also be so on Witt coordinates.
\end{proof}

Finally we prove uniqueness.

\begin{proof}[Proof of Theorem \ref{t:main2}]
We can assume without loss of generality that $d=p$ is prime. Suppose we have a map
\[
 N : \bW(k) \to \bW(k)
\]
satisfying the conditions in Theorem \ref{t:main2}, and suppose $N \neq N_p$. Let $N_p(a_n)=(b_n)$, $N(a_n)=(b_n')$, and on ghost coordinates $N_p \langle x_n \rangle = \langle y_n \rangle$, $N \langle x_n \rangle = \langle y_n' \rangle$. Then $y_n=x_n$ if $p \nmid n$ and $y_n=y_n' = x_{n/p}^p$ if $p \mid n$.

Because $N$ is multiplicative, it follows that $y_n'$ is a monomial in the variables $x_m$ with constant coefficient $1$ and because $N$ factors as $\bW_S(k) \to \bW_{\langle p \rangle S}(k)$ it follows that $y_n'$ only depends on $x_m$ for $m \mid n$.

Because $N$ is natural in the ring $k$, we can work with $k=\bZ[a_n]_{n \in \bN}$ and let $(a_n)$ be the canonical Witt vector in $\bW(k)$. Since we assumed $N \neq N_p$ there must be some smallest $t \in \bN$ with $b_t \neq b_t'$, and it follows that $p \nmid t$. Because $k$ is torsion free it follows that $y_t \neq y_t'$ as well.

Now we use that $y_{pt} = y_{pt}'$. By comparing
\[
 y_{pt} = \sum_{d \mid t} d b_d^{pt/d} + \sum_{d \mid t} pd b_{pd}^{t/d}
\]
and
\[
 y_{pt}' = \sum_{d \mid t} d (b_d')^{pt/d} + \sum_{d \mid t} pd (b_{pd}')^{t/d}
\]
and using that $b_d=b_d'$ for $d<t$, we find that $b_t \equiv b_t' \mod p$ and hence $y_t \equiv y_t' \mod p$. But this suffices to conclude that $y_t=y_t'$ because both are monomials with constant coefficient $1$ in $x_m$ for $m \mid t$.
\end{proof}

\section{Relation to the norm in equivariant stable homotopy theory}
Recall e.g.\ from \cite{HeMa97a} that for any ring $k$, the \emph{topological Hochschild homology} spectrum $THH(k)$ is a \emph{cyclotomic spectrum}, meaning a genuine $S^1$-equivariant spectrum with restriction maps $R_d : THH(k)^{C_{dn}} \to THH(k)^{C_n}$. If $k$ is commutative then $THH(k)$ comes with a multiplication map, and can be made into a commutative $S$-algebra in the category of equivariant orthogonal spectra. From now on let us once again assume that $k$ is commutative. From \cite[Addendum 3.3]{HeMa97a} we have an isomorphism
\[
 \pi_0 THH(k)^{C_n} \xto{\cong} \bW_{\langle n \rangle}(k)
\]
of rings, where as before $\langle n \rangle$ denotes the truncation set of divisors of $n$.

From the results in \cite{GrMa97} it follows that there is a norm map
\[
 N_d^{top} : \pi_0 THH(k)^{C_n} \to \pi_0 THH(k)^{C_{dn}}.
\]

\begin{thm}
There is a commutative diagram
\[ \xymatrix{
 \pi_0 THH(k)^{C_n} \ar[r]^-\cong \ar[d]_{N_d^{top}} & \bW_{\langle n \rangle}(k) \ar[d]^{N_d} \\
 \pi_0 THH(k)^{C_{dn}} \ar[r]^-\cong & \bW_{\langle dn \rangle}(k)
} \]
relating the Witt vector norm map to the topological norm map on $THH(k)$.
\end{thm}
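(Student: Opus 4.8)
The plan is to transport the topological norm maps $N_d^{top}$ through the Hesselholt--Madsen isomorphisms and to identify the resulting maps with the Witt vector norm $N_d$ by invoking the uniqueness statement, Theorem \ref{t:main2}; that is, I would show that
\[
 \bW_{\langle n \rangle}(k) \xleftarrow{\ \cong\ } \pi_0 THH(k)^{C_n} \xrightarrow{\ N_d^{top}\ } \pi_0 THH(k)^{C_{dn}} \xrightarrow{\ \cong\ } \bW_{\langle dn \rangle}(k)
\]
equals $N_d$, which is exactly the commutativity of the diagram in the theorem.

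First I would recall the standard dictionary relating the cyclotomic structure on $THH(k)$ to the Witt vector operations (see \cite[\S 3, Addendum 3.3]{HeMa97a}, \cite{He}): under $\pi_0 THH(k)^{C_m} \cong \bW_{\langle m \rangle}(k)$, the cyclotomic restriction $R_e$ induces the Witt vector restriction, the map of fixed points $THH(k)^{C_m} \to THH(k)^{C_{m/e}}$ induces the Frobenius $F_e$, and the transfer induces the Verschiebung $V_e$. Since $k$ is commutative, $THH(k)$ is a commutative $S^1$-equivariant ring spectrum whose cyclotomic structure is compatible with the multiplication, so the norm maps $N_d^{top}$ are multiplicative, natural in $k$, and commute with the cyclotomic restriction maps. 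Transporting through the isomorphisms above, they assemble into a single multiplicative natural map
\[
 N^{top} \colon \bW(k) = \lim_m \bW_{\langle m \rangle}(k) \longrightarrow \lim_m \bW_{\langle dm \rangle}(k) = \bW(k),
\]
the limits being taken over $\bN$ ordered by divisibility (for the right-hand equality one uses that the multiples of $d$ are cofinal in $\bN$). From the case $S = \langle m \rangle$, together with naturality, one checks --- just as in the proofs of Theorems \ref{t:main1} and \ref{t:main3} --- that $N^{top}$ restricts to a map $\bW_S(k) \to \bW_{\langle d \rangle S}(k)$ for every truncation set $S$.

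It then remains to check that $F_d \circ N^{top}$ is the $d$'th power map, after which Theorem \ref{t:main2} forces $N^{top} = N_d$. By the dictionary above, $F_d$ on $\pi_0 THH(k)^{C_m}$ is induced by the inclusion of fixed points $THH(k)^{C_{dm}} \to THH(k)^{C_m}$, i.e.\ by the Mackey restriction $\operatorname{res}^{C_{dm}}_{C_m}$, so $F_d \circ N^{top}$ is induced by $\operatorname{res}^{C_{dm}}_{C_m} \circ N^{C_{dm}}_{C_m}$. The norm--restriction double coset formula, applied to the abelian group $C_{dm}$ and its subgroup $C_m$ (so that all double cosets are single cosets and conjugation is trivial), gives
\[
 \operatorname{res}^{C_{dm}}_{C_m}\bigl(N^{C_{dm}}_{C_m}(x)\bigr) = \prod_{\gamma \in C_{dm}/C_m} \gamma \cdot x,
\]
a product of $d$ factors in which $C_{dm}/C_m$ acts through the residual $S^1/C_m$-action on $\pi_0 THH(k)^{C_m}$. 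Because that residual action is trivial (the group $S^1/C_m$ is connected), the composite is $x \mapsto x^d$, as required.

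I expect the main obstacle to be justifying that the Hill--Hopkins--Ravenel/Greenlees--May norm commutes with the cyclotomic restriction maps, i.e.\ that $THH(k)$ for commutative $k$ is a cyclotomic spectrum in a way compatible with its ring structure, so that the cyclotomic structure equivalences may be taken to be ring maps and the norms are preserved on the nose; this is the point that calls for care with point-set models of equivariant spectra. The remaining ingredients --- multiplicativity and naturality of the norm, the double coset formula, and triviality of the residual circle action --- are standard. One can also avoid these homotopy-theoretic issues entirely: by construction the topological norm on $\pi_0 THH(k)$ is the norm of the $C_{dn}$-Tambara functor that Brun identifies in \cite[Theorem B]{Br05} with the generalized Witt vectors of Dress and Siebeneicher, and, as already noted in the discussion of Brun's work, Theorem \ref{t:main2} shows that norm to be $N_d$ --- giving the theorem at once.
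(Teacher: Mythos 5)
Your proposal is correct and follows the same route as the paper: both reduce the statement to the uniqueness result (Theorem \ref{t:main2}) by observing that the topological norm satisfies $F_d \circ N_d^{top}(a) = a^d$, with $F_d$ induced by inclusion of fixed points. The paper's proof is a single sentence asserting this; you additionally verify the remaining hypotheses of Theorem \ref{t:main2} (multiplicativity, naturality, compatibility with restriction) and justify the power-map identity via the double coset formula and the triviality of the residual $S^1/C_m$-action on $\pi_0$, which is exactly the content the paper leaves implicit.
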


\begin{proof}
This follows from Theorem \ref{t:main2} because the topological norm map also satisfies $F_d \circ N_d^{top}(a)=a^d$. Here $F_d : \pi_0 THH(k)^{C_{dn}} \to \pi_0 THH(k)^{C_n}$ is induced by inclusion of fixed points.
\end{proof}

One can also obtain a statement about more general truncation sets by considering
\[
 \lim_{n \in S} \pi_0 THH(k)^{C_n},
\]
with the maps in the diagram given by restriction. We omit the details.

\section{Relations}
We have already described some of the relations between the norm map and the other structure maps of Witt vectors. Theorem \ref{t:main1} says that
\[
 F_d \circ N_d (a) = a^d.
\]

\begin{lemma}
If $\gcd(d,e)=1$ then
\[
 F_e \circ N_d = N_d \circ F_e.
\]
\end{lemma}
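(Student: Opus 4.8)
The plan is to verify the identity on ghost coordinates and invoke naturality plus torsion-freeness to conclude it holds on Witt coordinates. Since the ghost map $w : \bW_S(k) \to k^S$ is injective when $k$ is torsion-free, and both sides are natural in $k$, it suffices to check that $F_e \circ N_d$ and $N_d \circ F_e$ agree on ghost coordinates over the universal torsion-free ring $k = \bZ[a_n]_{n \in \bN}$.

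First I would fix the truncation sets. Starting from $\bW_S(k)$, the map $N_d$ lands in $\bW_{\langle d \rangle S}(k)$, and then $F_e$ takes us to $\bW_{(\langle d \rangle S)/e}(k)$. On the other side, $F_e : \bW_S(k) \to \bW_{S/e}(k)$ followed by $N_d : \bW_{S/e}(k) \to \bW_{\langle d \rangle (S/e)}(k)$. Using $\gcd(d,e)=1$ one checks $(\langle d \rangle S)/e = \langle d \rangle(S/e)$, so the two composites have the same source and target. Then I would compute on ghost coordinates: for the left-hand side, by Theorem \ref{t:main3} and the formula $(F_e \langle z \rangle)_t = z_{et}$,
\[
 (F_e \circ N_d \langle x \rangle)_t = (N_d \langle x \rangle)_{et} = x_{et/g'}^{g'}, \qquad g' = \gcd(d, et).
\]
For the right-hand side,
\[
 (N_d \circ F_e \langle x \rangle)_t = (F_e \langle x \rangle)_{t/g}^{g} = x_{et/g}^{g}, \qquad g = \gcd(d, t).
\]
Since $\gcd(d,e)=1$, we have $\gcd(d,et) = \gcd(d,t)$, so $g = g'$ and the two expressions coincide. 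This proves the identity on ghost coordinates, hence on Witt coordinates over the universal ring, hence for all $k$ by naturality.

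The only subtlety — and the step I would be most careful about — is confirming the source and target truncation sets genuinely match, so that the equality of ghost formulas is an equality of maps with the same codomain rather than a vacuous comparison; this is exactly where the coprimality hypothesis $\gcd(d,e)=1$ is used (both for $(\langle d \rangle S)/e = \langle d \rangle(S/e)$ and for $\gcd(d,et)=\gcd(d,t)$). Everything else is a direct substitution into the formulas already established in Theorems \ref{t:main1} and \ref{t:main3}.
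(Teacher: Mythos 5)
Your proof is correct and follows the same route as the paper: verify the identity on ghost coordinates using the explicit formulas for $N_d$ and $F_e$, then transfer to Witt coordinates via naturality and the universal torsion-free case. You simply spell out the computation (including the check that $(\langle d \rangle S)/e = \langle d \rangle (S/e)$ and that $\gcd(d,et)=\gcd(d,t)$) that the paper leaves to the reader.
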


\begin{proof}
This follows immediately from the corresponding statement on ghost coordinates, and this is easily verified using the explicit formula for $N_d$ from Theorem \ref{t:main3} and the corresponding formula defining $F_e$.
\end{proof}

Note that these two relations suffice for describing $F_e \circ N_d$ in general. We described the relationship between $N_d$ and restriction maps earlier. For completeness we restate that relation here:
\[
 N_d \circ R^S_T = R^{\langle d \rangle S}_{\langle d \rangle T} \circ N_d.
\]

The norm map is of course not additive, but we can say the following.

\begin{lemma}
Suppose $p$ is prime. We have
\[
 N_p(a + b) = N_p(a) + N_p(b) + \sum_{i=1}^{p-1} \frac{1}{p} \binom{p}{i} V_p(a^i b^{p-i}).
\]
\end{lemma}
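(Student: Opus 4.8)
The plan is to verify the identity on ghost coordinates. First I would note that the formula makes sense integrally: $\tfrac{1}{p}\binom{p}{i}$ is an integer for $1 \le i \le p-1$, so each summand $\tfrac{1}{p}\binom{p}{i} V_p(a^i b^{p-i})$ is a genuine element of $\bW(k)$, and both sides of the asserted equation are natural in $k$ and given by polynomial formulas with integer coefficients in the Witt components of $a$ and $b$. Hence it suffices to check the identity in the universal case $k=\bZ[a_1,a_2,\ldots,b_1,b_2,\ldots]$ with $a=(a_n)$, $b=(b_n)$ the tautological Witt vectors. Since this ring is torsion free, the ghost map is injective, and the whole lemma is reduced to a computation on ghost coordinates.

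Write the ghost coordinates of $a$ as $\langle x_s \rangle$ and those of $b$ as $\langle z_s \rangle$, so that $a+b$ has ghost coordinates $\langle x_s+z_s\rangle$ and $a^ib^{p-i}$ has ghost coordinates $\langle x_s^i z_s^{p-i}\rangle$. By (the proof of) Theorem \ref{t:main3}, a Witt vector with ghost coordinates $\langle u_s\rangle$ has $(N_p\langle u\rangle)_s = u_s$ when $p\nmid s$ and $(N_p\langle u\rangle)_s = u_{s/p}^p$ when $p\mid s$; recall also that $(V_p\langle w\rangle)_s = p\,w_{s/p}$ when $p\mid s$ and $0$ otherwise, and that addition and multiplication by an integer are computed coordinatewise on ghosts. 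For $p\nmid s$ the $s$th ghost coordinate of the right-hand side is $x_s+z_s+0$, matching the left-hand side. For $s=pt$ the $s$th ghost coordinate of the right-hand side is
\[
 x_t^p + z_t^p + \sum_{i=1}^{p-1} \frac{1}{p}\binom{p}{i}\cdot p\, x_t^i z_t^{p-i} = \sum_{i=0}^{p}\binom{p}{i} x_t^i z_t^{p-i} = (x_t+z_t)^p = (N_p(a+b))_{pt}.
\]
Thus both sides agree in every ghost coordinate, which proves the lemma.

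There is no substantial obstacle here; the one thing to watch is the interaction between the coefficient $\tfrac{1}{p}$ and the factor of $p$ that $V_p$ introduces on ghost coordinates, which is exactly what makes the cross terms $\binom{p}{i}x_t^i z_t^{p-i}$ appear with the right coefficients so that the binomial theorem closes the computation. (One could alternatively avoid even mentioning ghost coordinates by reducing further to the case $b=1$ via naturality and a substitution, but the ghost-coordinate computation above is the most transparent route.)
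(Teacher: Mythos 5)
Your proof is correct and follows the same route as the paper: reduce to the universal torsion-free case where the ghost map is injective, then verify the identity coordinatewise using the ghost formulas for $N_p$ and $V_p$ and the binomial theorem. The paper states this verification is "straightforward and will be omitted"; you have simply carried out the omitted computation, and done so correctly.
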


\begin{proof}
This follows from the corresponding formula on ghost coordinates, starting with
\[
 (V_p \langle x^i y^{p-i} \rangle )_n = \begin{cases} p x_{n/p}^i y_{n/p}^{p-i} \quad & \textnormal{if $p \mid n$} \\ 0 \quad & \textnormal{if $p \nmid n$} \end{cases}
\]
The verification is straightforward and will be omitted.
\end{proof}

The description of how to commute the norm past the Verschiebung commute is similar:

\begin{lemma}
Suppose $p$ is prime and $\gcd(p,q)=1$. Then
\[
 N_p \circ V_q(a) = V_q \circ N_p (a) + \frac{q^p-q}{pq} V_{pq}(a^p).
\]
If $p=q$ we have
\[
 N_p \circ V_p(a) = p^{p-2} V_{p^2}(a^p).
\]
\end{lemma}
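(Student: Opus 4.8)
The plan is to prove both identities by passing to ghost coordinates, where by Theorem \ref{t:main3} the norm has the transparent form $(N_p\langle x\rangle)_n = x_{n/p}^p$ if $p \mid n$ and $(N_p\langle x\rangle)_n = x_n$ otherwise, and where $(V_q\langle x\rangle)_n = q\,x_{n/q}$ if $q \mid n$ and $0$ otherwise. By naturality in $k$ it suffices to check everything for the universal torsion-free ring $k=\bZ[a_n]_{n\in\bN}$, for which the ghost map is injective, so an identity of ghost vectors is equivalent to the corresponding identity of Witt vectors. Before the main computation I would record that the coefficients occurring are genuine integers: by Fermat's little theorem $q^p\equiv q \bmod p$, and trivially $q \mid q^p-q$, so $\gcd(p,q)=1$ gives $pq \mid q^p-q$ and hence $\tfrac{q^p-q}{pq}\in\bZ$; and $p^{p-2}\in\bZ$ since $p\ge 2$.

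For the case $\gcd(p,q)=1$, I would compute $N_p\circ V_q$ and $V_q\circ N_p$ on ghost coordinates by a case analysis on the divisibility of the index $n$ by $p$ and by $q$, using repeatedly that when $\gcd(p,q)=1$ one has $q \mid n/p \iff q\mid n$ and $p\mid n/q \iff p\mid n$. The outcome is that both composites vanish unless $q\mid n$, both equal $q\,x_{n/q}$ when $q\mid n$ but $p\nmid n$, while when $pq\mid n$ one gets $(N_p V_q\langle x\rangle)_n = q^p x_{n/(pq)}^p$ and $(V_q N_p\langle x\rangle)_n = q\,x_{n/(pq)}^p$. The difference $(q^p-q)x_{n/(pq)}^p$, supported on multiples of $pq$, is precisely the ghost expression $\tfrac{q^p-q}{pq}\cdot pq\cdot (x^p)_{n/(pq)}$ for $\tfrac{q^p-q}{pq}V_{pq}(a^p)$, which establishes the first formula.

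For the case $p=q$ the same bookkeeping shows that $N_p\circ V_p$ vanishes at every index not divisible by $p^2$ and takes the value $(p\,x_{n/p^2})^p = p^p x_{n/p^2}^p$ at indices divisible by $p^2$; this agrees with the ghost expression $p^{p-2}\cdot p^2\cdot (x^p)_{n/p^2}$ for $p^{p-2}V_{p^2}(a^p)$, giving the second formula. In both cases injectivity of the ghost map then upgrades the identity to one of Witt vectors, and functoriality transports it to arbitrary $k$.

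The computation is essentially routine, so there is no deep obstacle; the step I would be most careful about is the divisibility bookkeeping in the mixed cases when $\gcd(p,q)=1$ (keeping straight when $n/p$ or $n/q$ is divisible by the other prime), together with the integrality check on $\tfrac{q^p-q}{pq}$, without which the stated formula would not even make sense over $\bZ$.
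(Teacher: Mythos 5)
Your proposal is correct and takes the same route as the paper: both identities are verified on ghost coordinates (using the formulas for $N_p$ from Theorem \ref{t:main3} and for $V_q$), and then transferred to Witt coordinates via injectivity of the ghost map over the universal torsion-free ring together with naturality. The paper simply omits the case-by-case computation and the integrality check on $\tfrac{q^p-q}{pq}$ that you carry out explicitly, and your bookkeeping in each divisibility case is accurate.
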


\begin{proof}
These formulas can easily be verified in ghost coordinates, and because they are valid there they must be valid in Witt coordinates as well.
\end{proof}

Of course other relations hold as well. For exmple we can say the following:

\begin{lemma}
Suppose $k$ contains a primitive $p$'th root of unity $\xi$. Then
\[
 \sum_{i=0}^{p-1} N_p(\xi^i a) = V_p(a^p).
\]
\end{lemma}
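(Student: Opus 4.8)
The plan is to verify the identity on ghost coordinates, exactly as with the other lemmas of this section; the one extra point requiring care is that the hypothesis involves a root of unity. Since $R=\bZ[x]/(1+x+\cdots+x^{p-1})$ is a free $\bZ$-module, hence torsion free, and any ring $k$ equipped with a primitive $p$'th root of unity $\xi$ and a Witt vector $a$ receives a ring map from $k_0=R[a_1,a_2,\ldots]$ (with $(a_n)$ the universal Witt vector) sending $x\mapsto\xi$, naturality reduces us to $k_0$, where the ghost map is injective. Throughout I would only use the identity $\sum_{i=0}^{p-1}\xi^{ij}=p$ for $p\mid j$ and $\sum_{i=0}^{p-1}\xi^{ij}=0$ for $p\nmid j$, which is precisely the relation $1+\xi+\cdots+\xi^{p-1}=0$ (for $p\nmid j$ the exponent $j$ is invertible mod $p$, so $i\mapsto ij$ permutes $\bZ/p$).

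First I would record the ghost coordinates of the summands. Interpreting $\xi^i a$ as $[\xi]^i\cdot a$, where $[\,\cdot\,]$ is the Teichm\"uller section, and writing $\langle x_s\rangle=w(a)$, the facts that $w$ is a ring homomorphism and $w([\xi])_s=\xi^s$ give $w(\xi^i a)_s=\xi^{is}x_s$. Next I would apply Theorem~\ref{t:main3} with $d=p$, which says $N_p$ sends $\langle z_s\rangle$ to $\langle w_t\rangle$ with $w_t=z_t$ if $p\nmid t$ and $w_t=z_{t/p}^p$ if $p\mid t$. Combining these,
\[
 \big(N_p(\xi^i a)\big)_t=
 \begin{cases}
  \xi^{it}x_t & \textnormal{if } p\nmid t,\\
  \xi^{it}x_{t/p}^p & \textnormal{if } p\mid t.
 \end{cases}
\]

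Summing over $i=0,\ldots,p-1$ and applying the root-of-unity identity, the $t$'th ghost coordinate of $\sum_{i=0}^{p-1}N_p(\xi^i a)$ becomes $0$ when $p\nmid t$ and $p\,x_{t/p}^p$ when $p\mid t$. On the other hand $a^p$ has ghost coordinates $\langle x_s^p\rangle$, so the formula for the Verschiebung on ghost coordinates shows $V_p(a^p)$ has $t$'th ghost coordinate $p\,x_{t/p}^p$ when $p\mid t$ and $0$ otherwise. The two sides agree on ghost coordinates over $k_0$, hence as Witt vectors over $k_0$ by injectivity of the ghost map, hence over an arbitrary $k$ by naturality. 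I do not expect a genuine obstacle here: the only two things to get right are the reduction to a torsion-free ring (which works because $R=\bZ[\zeta_p]$ is torsion free, so the appearance of a root of unity causes no difficulty) and the reading of $\xi^i a$ as multiplication by the Teichm\"uller lift, after which the computation is a one-line cancellation.
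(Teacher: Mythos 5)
Your proof is correct and follows exactly the route the paper intends: the paper's own proof is simply the remark that the identity ``can be easily verified in ghost coordinates,'' and you carry out that verification, together with the standard reduction to the torsion-free universal case $\bZ[\zeta_p][a_1,a_2,\ldots]$ and the reading of $\xi^i a$ as multiplication by the Teichm\"uller lift. The ghost-coordinate computation and the root-of-unity cancellation are both right.
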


For example we always have $N_2(a)+N_2(-a)=V_2(a^2)$.

\begin{proof}
Once again this can be easily verified in ghost coordinates.
\end{proof}

\bibliographystyle{plain}
\bibliography{b.bib}

\end{document}